\newtheorem{theorem}{Theorem}[section]
\newtheorem{proposition}[theorem]{Proposition}
\theoremstyle{remark}
\newtheorem{remark}{Remark}[section]
\numberwithin{equation}{section}
\begin{document}

\title[Discrete Fourier Transform and Schur polynomials]
{Discrete Fourier transform associated with generalized Schur polynomials}

\author{J.F.  van Diejen}

\address{
Instituto de Matem\'atica y F\'{\i}sica, Universidad de Talca,
Casilla 747, Talca, Chile}

\email{diejen@inst-mat.utalca.cl}

\author{E. Emsiz}

\address{
Facultad de Matem\'aticas, Pontificia Universidad Cat\'olica de Chile,
Casilla 306, Correo 22, Santiago, Chile}
\email{eemsiz@mat.uc.cl}

\subjclass[2010]{Primary: 65T50;  Secondary 05E05, 15B10, 42A10,  42B10, 33D52}
\keywords{discrete Fourier transform, discrete Laplacian, boundary perturbations, diagonalization,
generalized Schur polynomials.}

\thanks{This work was supported in part by the {\em Fondo Nacional de Desarrollo
Cient\'{\i}fico y Tecnol\'ogico (FONDECYT)} Grants   \# 1141114 and \# 1170179.}

\date{November 2017}

\begin{abstract}
We prove the Plancherel formula for a four-parameter family of discrete Fourier transforms and their multivariate generalizations stemming from
corresponding generalized Schur polynomials. For special choices of the parameters, this recovers 
 the sixteen classic discrete sine- and cosine transforms DST-1,$\ldots$,DST-8 and DCT-1,$\ldots$,DCT-8, as well as recently  studied (anti-)symmetric multivariate generalizations thereof.
\end{abstract}

\maketitle

%\tableofcontents

%%%%%%%%%%%%%%%%%%%%%%%%%%%%%%%%%%%%%%%%%%%%
%%%%%%%%%%%%%%% SECTION %%%%%%%%%%%%%%%%%%%%%%%
%%%%%%%%%%%%%%% SECTION %%%%%%%%%%%%%%%%%%%%%%%
%%%%%%%%%%%%%%% SECTION %%%%%%%%%%%%%%%%%%%%%%%
%%%%%%%%%%%%%%%%%%%%%%%%%%%%%%%%%%%%%%%%%%%%

\section{Introduction}\label{sec1}
Apart from its profound theoretical significance, the discrete Fourier transform provides an effective computational tool for performing numerical harmonic analysis in applied contexts  \cite{aus-gru:fourier,ter:fourier,won:discrete,goo:discrete}. In its most conventional form,
the kernel of the discrete Fourier transform (DFT) with period $m$ in $n$ variables is given by the eigenfunctions of a discrete Laplacian on the periodic lattice $\mathbb{Z}^n/ m\mathbb{Z}^n$. By restricting to the space of (permutation) symmetric or anti-symmetric functions on this lattice, one is led to finite-dimensional discrete orthogonality relations for respectively the monomial symmetric polynomials and the Schur polynomials, cf. e.g.  \cite[Sec. 5]{kli-pat:anti-a} or \cite[Sec. 8.4]{die-ems:discrete13}.  In mathematical physics, such finite-dimensional discrete orthogonality structures arise naturally in the study of certain quantum-integrable $n$-particle systems on the one-dimensional periodic lattice $\mathbb{Z}/m\mathbb{Z}$, cf. e.g. \cite[Sec. 5.2]{die-vin:quantum}, \cite[Sec. 5.2]{die:diagonalization}  and   \cite[Sec. 10]{kor-stro:slnk}.

In this note we consider  a discrete Fourier transform on a finite aperiodic integral grid consisting of the nodes $ \{ 0,\ldots ,m\}$ (with $m>0$). It stems from the eigenfunctions of a perturbation of the discrete Laplacian characterized by  homogeneous two-parameter boundary conditions at both ends of the grid. For specific values of the boundary parameters,  the  celebrated sixteen discrete sine- and cosine transforms DST-1,$\ldots$,DST-8 and DCT-1,$\ldots$,DCT-8 
\cite{wan-hun:discrete,str:discrete,bri-yip-rao:discrete} are recovered. A distinctive characteristic of our discrete Fourier transform is that the spectrum of the underlying Laplacian depends algebraically on the values of the boundary parameters. The DST-$k$ and DCT-$k$ ($k=1,\ldots ,8$) correspond from this perspective to well-known boundary conditions of Dirichlet- and Neumann type, in which cases the gaps in  the spectral parameter become equidistant. For general boundary parameters the discrete Fourier kernel turns out to be given by special instances of the Bernstein-Szeg\"o orthogonal polynomials
\cite{sze:orthogonal,gau-not:gauss,gri:oscillatory}. By building the associated generalized Schur orthogonal
polynomials \cite{mac:schur,nak-nou-shi-yam:tableau,ser-ves:jacobi} (cf. also Refs. \cite{las:jacobi,las:laguerre,las:hermite,ols:laguerre,cor-wil:macdonald} for examples of analogous generalized Schur polynomials associated with the classical families of hypergeometric orthogonal polynomials), we deduce the Plancherel formula for a multivariate discrete Fourier transform that specializes in the cases of Dirichlet- and Neumann boundary conditions to special instances of intensely studied
(anti-)symmetric extensions of the discrete (co)sine transforms 
\cite{kli-pat:anti-a,kli-pat:anti-b,li-xu:discrete,moo-pat:cubature,moo-mot-pat:gaussian,czy-hri:generalized,hri-mot:discrete}.
From the point of view of mathematical physics, our results settle the Plancherel problem for a phase model of strongly correlated bosons on the lattice
\cite{bog:boxed,die:diagonalization,kor-stro:slnk}  endowed with open-end boundary interactions \cite{die-ems-zur:completeness}.

The presentation breaks up in two parts: Section \ref{sec2} and Section \ref{sec3}, treating the univariate- and the multivariate setup, respectively.

\section{Perturbation of the discrete (co)sine transforms at the boundary}\label{sec2}
By diagonalizing a one-dimensional discrete Laplacian with homogeneous two-parameter boundary conditions at the lattice-ends, we arrive
at a four-parameter family of discrete Fourier transforms. This family unifies the sixteen standard discrete sine-- and cosine transforms DST-1,\ldots,DST-8  and DCT-1,\ldots,DCT-8 \cite{str:discrete,bri-yip-rao:discrete}, which are recovered
for special values of the boundary parameters.

\subsection{Discrete Laplacian}
For $m\in\mathbb{N}$, let  $\Lambda^{(m)}:=\{ 0,1,\ldots ,m\}$. We consider the following action of the  $(m+1)\times (m+1)$ tridiagonal matrix
\begin{equation}\label{L}
\text{L}^{(m)}=
\begin{bmatrix}
b_- &  1-a_-  & 0&  \cdots & 0\\
1  &  0 & 1& & \vdots \\
0 & \ddots & \ddots & \ddots &0\\
\vdots  &   &  1       &0&1  \\
0 & \cdots & 0& 1-a_+ & b_+
\end{bmatrix}
\end{equation}
on the ($m+1$)-dimensional space $C(\Lambda^{(m)})$ of functions
$f:\Lambda^{(m)}\to\mathbb{C}$:
\begin{equation}\label{L-action}
(\text{L}^{(m)}f) (l) =  
\begin{cases}   \bigl(1-a_-\bigr) f(l+1)+ 
  b_-  f(l)  &\text{if}\ l=0, \\
 f(l+1)+f(l-1) 
&\text{if}\ 0<l<m, \\
 \bigl( 1-a_+\bigr)f(l-1) 
 + b_+ f(l) &\text{if}\ l=m.
\end{cases}
\end{equation}
This is the action of the discrete Laplacian
\begin{subequations}
\begin{equation}\label{La}
(\text{L}^{(m)}f)(l)= f(l+1)+f(l-1) \qquad  (l\in\Lambda^{(m)})
\end{equation} endowed
with three-point homogeneous boundary conditions at the lattice-ends:
\begin{equation}\label{Lb}
 f(-1):=-a_-f(1)+b_- f(0)   \quad\text{and}\quad  f(m+1):= -a_+ f(m-1) +b_+ f(m) ,
 \end{equation}
 \end{subequations}
which are governed by a total of four boundary parameters $a_-,b_-,a_+,b_+\in\mathbb{R}$.

When $(a_-,b_-)$ and $(a_+,b_+)$  belong to  $\{ (-1,0), (0,1)\}$ the boundary conditions in question are of
Neumann type (centered respectively at the boundary node or between the boundary node and the virtual  node on the exterior), whereas
for $(a_-,b_-)$ and $(a_+,b_+)$ taken from $\{ (0,0), (0,-1)\}$
one specializes to corresponding conditions of Dirichlet type.

\subsection{Diagonalization}
We will now diagonalize $\text{L}^{(m)}$ \eqref{L} in $C(\Lambda^{(m)})$ for parameters of the form
\begin{subequations}
\begin{equation}\label{parameters:a}
\boxed{a_\pm = p_\pm q_\pm \quad\text{and}\quad b_\pm = p_\pm + q_\pm ,} 
\end{equation}
with $p_-,q_-,p_+,q_+$ subject to the  restriction
\begin{equation}\label{parameters:b}
\boxed{-1< p_\pm \leq q_\pm < 1  .}
\end{equation}
\end{subequations}

Given $\hat{l}\in\Lambda^{(m)}$, let $\xi_{\hat{l}}^{(m)}$ denote the unique real-valued solution of the transcendental equation
\begin{subequations}
\begin{equation}\label{bethe:a}
2m \xi      + v_{p_-}(\xi )+v_{q_-}(\xi )+v_{p_+}(\xi )+v_{q_+}(\xi )       =  2\pi (\hat{l}+1),
\end{equation}
where for any $\xi\in\mathbb{R}$:
\begin{align}\label{bethe:b}
v_q(\xi ) &:= 
\int_0^\xi \frac{ (1-q^2)\ \text{d}x}{1-2q\cos(x)+q^2}
 \qquad (-1< q < 1) \\
 & =
i \log
\biggl( \frac{1- qe^{i\xi}}{e^{i\xi} - q }  \biggr) =  2\arctan \Biggl(    \frac{1+q}{1-q}\tan \left( \frac{\xi}{2} \right)   \Biggr).   \nonumber
\end{align}
\end{subequations} 
Here our choice for the branches of the logarithm and the arctangent are determined by the property that the real-analytic function $v_q(\xi)$ \eqref{bethe:b} is odd, strictly monotonously increasing, and quasi-periodic: $v_q(\xi  +2\pi)=v_q(\xi)+2\pi$. Since $v_q(0)=0$ and $v_q(\pi)=\pi$,
it is  manifest from Eq. \eqref{bethe:a} and the monotonicity of $v_q(\xi)$ \eqref{bethe:b} that
\begin{equation}\label{spectrum}
0<\xi_{0}^{(m)} <\xi_{1}^{(m)}< \cdots <\xi_{m-1}^{(m)}<\xi_{m}^{(m)}<\pi .
\end{equation}
The eigenbasis for $\text{L}^{(m)}$ turns out to be given by $(m+1)$ functions $\psi^{(m)}_{0} ,\ldots , \psi^{(m)}_{m}$ in $ C(\Lambda^{(m)})$
of the form
\begin{subequations}
\begin{equation}\label{e-vector:a}
\psi^{(m)}_{\hat{l}} (l) :=p_{l} (\xi^{(m)}_{\hat{l}}) \qquad (\hat{l}, l \in \Lambda^{(m)} ).
\end{equation}
Here $p_l (\xi)$ denotes the two-parameter Bernstein-Szeg\"o polynomial  (cf. e.g. \cite[Section 2.6]{sze:orthogonal}, \cite{gau-not:gauss} and \cite{gri:oscillatory})
\begin{align}\label{e-vector:b}
p_l (\xi)  & :=   c(\xi ) e^{i l\xi} + c(-\xi) e^{-il\xi} \quad\text{with}\quad c(\xi) := \frac{(1-p_-e^{-i\xi})(1-q_-e^{-i\xi})}{(1-e^{-2i\xi})}   \\
& = U_l(\cos(\xi))- b_-U_{l-1}(\cos(\xi)) + a_- U_{l-2}(\cos(\xi)) ,\nonumber
\end{align}
\end{subequations}
where $U_l(\cdot)$ refers to the Chebyshev polynomials of the second kind, i.e.
$
U_l(\cos(\xi)) := \sin\bigl((l+1)\xi\bigr)/\sin(\xi) 
$ ($l\in\mathbb{Z}$).

\begin{proposition}[Eigenfunctions]\label{eigenfunctions:prp}
(i) For any $\hat{l}\in\Lambda^{(m)}$, the function $\psi^{(m)}_{\hat{l}}$ \eqref{e-vector:a}, \eqref{e-vector:b}
satisfies the eigenvalue equation
\begin{equation}\label{eigenfunctions:eq}
\emph{L}^{(m)}\psi^{(m)}_{\hat{l}}=2\cos (\xi^{(m)}_{\hat{l}}) \psi^{(m)}_{\hat{l}} .
\end{equation}

(ii) The eigenfunctions  $\psi^{(m)}_{\hat{l}}$, $\hat{l}\in\Lambda^{(m)}$ constitute a basis for $C(\Lambda^{(m)})$.
\end{proposition}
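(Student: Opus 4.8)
The plan is to exploit the plane-wave structure of the ansatz $\psi^{(m)}_{\hat l}(l)=p_l(\xi)=c(\xi)e^{il\xi}+c(-\xi)e^{-il\xi}$, where I abbreviate $\xi=\xi^{(m)}_{\hat l}$. Since each plane wave obeys $e^{\pm i(l+1)\xi}+e^{\pm i(l-1)\xi}=2\cos(\xi)\,e^{\pm il\xi}$, the function $p_l(\xi)$ satisfies the free three-term recurrence $p_{l+1}(\xi)+p_{l-1}(\xi)=2\cos(\xi)p_l(\xi)$ for \emph{every} $l\in\mathbb{Z}$. Hence the eigenvalue equation \eqref{eigenfunctions:eq} holds automatically at the interior nodes $0<l<m$, and the entire problem reduces to the two boundary rows of \eqref{L-action}. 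In view of the boundary prescription \eqref{Lb} together with the interior recurrence \eqref{La}, the row at $l=0$ reproduces $2\cos(\xi)p_0(\xi)$ precisely when the virtual value $p_{-1}(\xi)$ furnished by the plane-wave formula satisfies $p_{-1}(\xi)=-a_-p_1(\xi)+b_-p_0(\xi)$, and likewise the row at $l=m$ requires $p_{m+1}(\xi)=-a_+p_{m-1}(\xi)+b_+p_m(\xi)$.

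I would first dispose of the left boundary, which never constrains $\xi$. Using the Chebyshev form $p_l(\xi)=U_l-b_-U_{l-1}+a_-U_{l-2}$ (with $U_l=U_l(\cos\xi)$ and the conventions $U_{-1}=0$, $U_{-2}=-1$, $U_{-3}=-2\cos\xi$) one evaluates $p_{-1}(\xi)=b_--2a_-\cos\xi$, $p_0(\xi)=1-a_-$, and $p_1(\xi)=2\cos\xi-b_-$, and a one-line check shows that $p_{-1}(\xi)=-a_-p_1(\xi)+b_-p_0(\xi)$ holds \emph{identically in} $\xi$. This is exactly the role of the factor $c(\xi)=(1-p_-e^{-i\xi})(1-q_-e^{-i\xi})/(1-e^{-2i\xi})$: it encodes the left boundary parameters so that the left row is satisfied for all $\xi$.

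The substance of part (i) is the right boundary, where $\xi$ gets quantized. Substituting the plane-wave form and using $a_+=p_+q_+$, $b_+=p_++q_+$, I would factor
\[
e^{i\xi}+a_+e^{-i\xi}-b_+=(1-p_+e^{-i\xi})(e^{i\xi}-q_+)=:A(\xi),
\]
so that the right-boundary identity becomes $c(\xi)e^{im\xi}A(\xi)+c(-\xi)e^{-im\xi}A(-\xi)=0$. For real $\xi$ and real parameters the second summand is the complex conjugate of the first, so (once one observes $c(\xi)\,A(\xi)\neq0$ on $(0,\pi)$, which is immediate from $|p_\pm|,|q_\pm|<1$) the condition is equivalent to $c(\xi)e^{im\xi}A(\xi)\in i\mathbb{R}$, i.e.\ to the phase condition $\arg\!\big(c(\xi)e^{im\xi}A(\xi)\big)\equiv\tfrac{\pi}{2}\pmod{\pi}$. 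The computational core is the phase identity $\arg(1-qe^{-i\xi})=\tfrac12\big(v_q(\xi)-\xi\big)$, read off from the logarithmic form of $v_q$ in \eqref{bethe:b}; combined with $\arg(1-e^{-2i\xi})=\tfrac{\pi}{2}-\xi$ on $(0,\pi)$ it collapses the total phase to $m\xi+\tfrac12\big(v_{p_-}(\xi)+v_{q_-}(\xi)+v_{p_+}(\xi)+v_{q_+}(\xi)\big)-\tfrac{\pi}{2}$. Setting this congruent to $\tfrac{\pi}{2}$ modulo $\pi$ and multiplying by $2$ reproduces $2m\xi+v_{p_-}(\xi)+v_{q_-}(\xi)+v_{p_+}(\xi)+v_{q_+}(\xi)\in2\pi\mathbb{Z}$, which is exactly the spectral equation \eqref{bethe:a} with right-hand side $2\pi(\hat l+1)$. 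I expect this phase bookkeeping --- correctly tracking branches so that $\arg(1-qe^{-i\xi})$ matches $v_q$ and the integer on the right-hand side lands on $\hat l+1$ --- to be the main obstacle; everything else is elementary factorization or the free recurrence.

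For part (ii) I would argue by distinctness of eigenvalues. The map $\xi\mapsto2\cos\xi$ is strictly decreasing on $(0,\pi)$, so by the strict ordering \eqref{spectrum} the $m+1$ eigenvalues $2\cos(\xi^{(m)}_{\hat l})$, $\hat l\in\Lambda^{(m)}$, are pairwise distinct. Each $\psi^{(m)}_{\hat l}$ is a genuine (nonzero) eigenvector because $\psi^{(m)}_{\hat l}(0)=p_0(\xi^{(m)}_{\hat l})=1-a_-=1-p_-q_-\neq0$ under the restriction \eqref{parameters:b}. Eigenvectors of the square matrix $\text{L}^{(m)}$ attached to distinct eigenvalues are linearly independent, and since there are $m+1$ of them in the $(m+1)$-dimensional space $C(\Lambda^{(m)})$, they span it; hence $\{\psi^{(m)}_{\hat l}\}_{\hat l\in\Lambda^{(m)}}$ is a basis.
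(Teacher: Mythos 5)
Your proposal is correct and follows essentially the same route as the paper: the plane-wave structure disposes of the interior recurrence, the left boundary condition holds identically in $\xi$ (you check it via the Chebyshev form, the paper via the equivalent functional relation for $c(\xi)$), and the right boundary condition is shown to be equivalent to the quantization condition \eqref{bethe:a} — your phase/argument bookkeeping is just the logarithmic form of the paper's step of exponentiating $i$ times Eq.~\eqref{bethe:a} to obtain the algebraic relation \eqref{bethe:eq}. Part (ii) is argued identically (nonvanishing at $l=0$ since $p_0(\xi)=1-a_-$, plus distinctness of the eigenvalues $2\cos(\xi^{(m)}_{\hat l})$ from \eqref{spectrum}).
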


\begin{proof}
Because $p_l(\xi )$ \eqref{e-vector:b}  is built from a linear superposition of the plane waves $e^{i\xi l}$ and $e^{-i\xi l}$,
it is clear that $p_{l+1}(\xi)+p_{l-1}(\xi)=2\cos (\xi) p_l (\xi)$ for all $l\in\mathbb{Z}$. To show that in fact Eq. \eqref{eigenfunctions:eq} holds,
it is enough to verify that at  $\xi=\xi^{(m)}_{\hat{l}}$ ($\hat{l}\in\Lambda^{(m)}$) the boundary conditions
\begin{equation*}
p_{-1}( \xi) = -a_-p_1(\xi)+b_-p_0(\xi ) \quad\text{and}\quad  p_{m+1}( \xi) = -a_+p_{m-1}(\xi)+b_+p_m(\xi )
\end{equation*}
are satisfied. Substitution of $p_l(\xi)=c(\xi) e^{i\xi l} + c(-\xi ) e^{-i\xi l}$ reformulates these  two boundary conditions in terms of functional relations for the complex amplitude $c(\xi)$:
\begin{equation*}
c(\xi) (1-p_- e^{i\xi})(1-q_-e^{i\xi})e^{-i\xi}
 + c(-\xi) (1-p_- e^{-i\xi})(1-q_-e^{-i\xi})e^{i\xi}=0
\end{equation*}
and
\begin{align*}
c(\xi) (1-p_+ e^{-i\xi}) & (1-q_+e^{-i\xi}) e^{i(m+1)\xi}  \,+ \\
& c(-\xi) (1-p_+ e^{i\xi})(1-q_+e^{i\xi}) e^{-i(m+1)\xi} =0,
\end{align*}
respectively.
Assuming $\xi$ real and $c(\xi)$ taken from Eq. \eqref{e-vector:b}, the first relation (and hence the first boundary condition) is seen to hold as a (trigonometric) polynomial identity for any value of $\xi$, whereas the second relation (and hence the second boundary condition) is  only satisfied  provided
\begin{equation}\label{bethe:eq}
e^{2im\xi}= \frac{(1-p_- e^{i\xi})(1-q_-e^{i\xi})(1-p_+ e^{i\xi})(1-q_+e^{i\xi})}{( e^{i\xi}-p_-)(e^{i\xi}-q_-)( e^{i\xi}-p_+)(e^{i\xi}-q_+)} .
\end{equation}
Upon multiplying Eq. \eqref{bethe:a} by the imaginary unit $i$ and exponentiating both sides with the aid of Eq. \eqref{bethe:b}, one confirms  that the algebraic relation in Eq. \eqref{bethe:eq} is  fulfilled at $\xi=\xi^{(m)}_{\hat{l}}$, $\hat{l}\in\Lambda^{(m)}$. This completes the proof of the statement that for any $\hat{l}\in\Lambda^{(m)}$ the function
$\psi_{\hat{l}}^{(m)}$ \eqref{e-vector:a}, \eqref{e-vector:b}
 solves the eigenvalue equation \eqref{eigenfunctions:eq}. The solutions in question give rise to
 nontrivial eigenfunctions of $\text{L}^{(m)}$ in $C(\Lambda^{(m)})$, because  $\psi_{\hat{l}}^{(m)}(l)\neq 0$ at $l=0$ as $p_0(\xi)=1-a_- >0$.
 Moreover, since the corresponding eigenvalues
$2\cos (\xi^{(m)}_0),\ldots ,2\cos (\xi^{(m)}_m)$ are distinct in view of Eq. \eqref{spectrum},
the eigenfunctions $\psi_{{0}}^{(m)},\ldots , \psi_{{m}}^{(m)}$ provide a basis for $C(\Lambda^{(m)})$.  \end{proof}

\begin{remark}
Since the derivative of $v_q(\xi)$ \eqref{bethe:b} remains bounded
\begin{equation*}
\frac{1-|q|}{1+|q|} \leq v_q^\prime (\xi)\leq \frac{1+|q|}{1-|q|} \quad \text{for}\  -1 <q <1, 
\end{equation*}
the subsequent estimates for the locations of the spectral points $\xi^{(m)}_{\hat{l}}$
and the size of the spectral gaps $ |   \xi^{(m)}_{\hat{l}} -  \xi^{(m)}_{\hat{k}}  |$  are immediate  from Eqs. \eqref{bethe:a}, \eqref{bethe:b}:
\begin{subequations}
\begin{equation}\label{e:momentgaps1}
\frac{\pi(\hat{l}+1)}{m+\kappa_-} \leq \xi^{(m)}_{\hat{l}} \leq \frac{\pi(\hat{l}+1)}{m+\kappa_+}\qquad (0\leq\hat{l}\leq m)
\end{equation}
and
\begin{equation}
\frac{\pi |\hat{l}-\hat{k}|}{m+\kappa_-} \leq  |   \xi^{(m)}_{\hat{l}} -  \xi^{(m)}_{\hat{k}}  | \leq \frac{\pi |\hat{l}-\hat{k}|}{m+\kappa_+}  \qquad (0\leq\hat{l}\neq \hat{k}\leq m)
,
\end{equation}
where
\begin{equation}
\kappa_{\pm} := 
\frac{1}{2}\left( \left(\frac{1-|p_-|}{1+ |p_-|}\right)^{\pm 1}+ \left(\frac{1-|q_-|}{1+ |q_-|}\right)^{\pm 1}+  \left(\frac{1-|p_+|}{1+ |p_+|}\right)^{\pm 1}+ \left(\frac{1-|q_+|}{1+ |q_+|}\right)^{\pm 1} \right).
\end{equation}
\end{subequations}
\end{remark}

\subsection{Plancherel formula}
For any $l\in\mathbb{Z}$, let 
\begin{equation*}
\delta_l:=\begin{cases} 1 &\text{if}\ l=0 ,\\  0 &\text{if}\ l\neq 0.
\end{cases}
\end{equation*}
The next proposition affirms that the eigenbasis in Proposition \ref{eigenfunctions:prp} is orthogonal with respect to the weights
\begin{equation}\label{weights}
\Delta^{(m)}_l :=(1-a_- \delta_l)^{-1} (1-a_+\delta_{m-l})^{-1}\qquad (l\in\Lambda^{(m)}),
\end{equation}
and in addition provides the quadratic norms turning this basis into an orthonormal one.

\begin{proposition}[Orthogonality Relations]\label{orthogonality:prp}
For parameters of the form in Eqs. \eqref{parameters:a}, \eqref{parameters:b}, the eigenfunctions  $\psi^{(m)}_{\hat{l}}$, $\hat{l}\in\Lambda^{(m)}$
satisfy the following orthogonality relations
\begin{subequations}
\begin{equation}\label{orthogonality:a}
\sum_{l\in\Lambda^{(m)}}  \psi^{(m)}_{\hat{l}}  (l) \psi^{(m)}_{\hat{k}} (l) \Delta^{(m)}_l =
\begin{cases}
0 & \text{if}\ \hat{k}\neq \hat{l} \\
1/ \hat{\Delta}^{(m)}_{\hat{l}} &\text{if}\ \hat{k}=\hat{l}
\end{cases} 
\end{equation}
($\hat{l},\hat{k}\in\Lambda^{(m)}$). Here the quadratic norms are governed by the Plancherel measure
\begin{equation}
 \hat{\Delta}^{(m)}_{\hat{l}} := \frac{1}{c(\xi^{(m)}_{\hat{l}} ) c(-\xi^{(m)}_{\hat{l}} )   H^{(m)}(\xi^{(m)}_{\hat{l}} )},
\end{equation}
with $c(\xi)$ taken from Eq. \eqref{e-vector:b} and
\begin{equation}
H^{(m)}(\xi):= 2m +   v_{p_-}^\prime (\xi )+v_{q_-}^\prime (\xi )+v_{p_+}^\prime (\xi )+v_{q_+}^\prime (\xi )  
\end{equation}
\end{subequations}
(where the prime indicates the derivative).
\end{proposition}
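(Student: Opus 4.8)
The plan is to handle the two cases in Eq.~\eqref{orthogonality:a} by entirely different means: the vanishing for $\hat{k}\neq\hat{l}$ will be a consequence of self-adjointness, whereas the quadratic norm will be extracted from a discrete Green identity together with a logarithmic-derivative computation that reproduces $H^{(m)}$. For the off-diagonal case I would first observe that $\text{L}^{(m)}$ \eqref{L} is self-adjoint with respect to the (real, symmetric) bilinear form $\langle f,g\rangle_\Delta:=\sum_{l\in\Lambda^{(m)}}f(l)g(l)\Delta^{(m)}_l$. Concretely, with $D=\mathrm{diag}(\Delta^{(m)}_0,\ldots,\Delta^{(m)}_m)$ one checks that $D\,\text{L}^{(m)}$ is symmetric: only the corner entries in the first and last rows are asymmetric, and the weights $\Delta^{(m)}_0=(1-a_-)^{-1}$, $\Delta^{(m)}_m=(1-a_+)^{-1}$ are exactly what is needed to balance the off-diagonal entries $1-a_\mp$ against the neighbouring $1$'s. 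Since the eigenfunctions $\psi^{(m)}_{\hat{l}}$ are real-valued and the eigenvalues $2\cos(\xi^{(m)}_{\hat{l}})$ are pairwise distinct by Eq.~\eqref{spectrum}, they are automatically $\langle\cdot,\cdot\rangle_\Delta$-orthogonal, which disposes of the case $\hat{k}\neq\hat{l}$.

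For the quadratic norm I would exploit that, for \emph{arbitrary} $\xi$, the function $l\mapsto p_l(\xi)$ already satisfies the bulk recurrence and the \emph{left} boundary condition identically (both verified in the proof of Proposition~\ref{eigenfunctions:prp}), so that $(\text{L}^{(m)}p_\cdot(\xi))(l)=2\cos(\xi)\,p_l(\xi)$ for all $l<m$, the only discrepancy occurring at $l=m$ and being measured by the defect $R(\xi):=p_{m+1}(\xi)+a_+p_{m-1}(\xi)-b_+p_m(\xi)$, which vanishes precisely at the spectral points. Inserting $\psi=p_\cdot(\xi)$ and $\phi=p_\cdot(\eta)$ into $\langle\text{L}^{(m)}\psi,\phi\rangle_\Delta=\langle\psi,\text{L}^{(m)}\phi\rangle_\Delta$ and isolating the single uncompensated boundary contribution yields the discrete Green identity
\[
\bigl(2\cos(\xi)-2\cos(\eta)\bigr)\langle p_\cdot(\xi),p_\cdot(\eta)\rangle_\Delta=\Delta^{(m)}_m\bigl(p_m(\eta)R(\xi)-p_m(\xi)R(\eta)\bigr).
\]
Fixing $\xi=\xi^{(m)}_{\hat{l}}$ (so that $R(\xi)=0$), dividing by $\eta-\xi$, and letting $\eta\to\xi$ turns this into the Casoratian-type norm formula $2\sin(\xi^{(m)}_{\hat{l}})/\hat{\Delta}^{(m)}_{\hat{l}}=-\Delta^{(m)}_m\,p_m(\xi^{(m)}_{\hat{l}})\,R'(\xi^{(m)}_{\hat{l}})$, where the factor $2\sin(\xi^{(m)}_{\hat{l}})>0$ comes from differentiating $2\cos$ and is nonzero since $\xi^{(m)}_{\hat{l}}\in(0,\pi)$.

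It then remains to evaluate the right-hand side, and this is the step I expect to be the main obstacle. Writing $R(\xi)=g(\xi)+g(-\xi)$ with $g(\xi):=c(\xi)e^{i(m-1)\xi}(e^{i\xi}-p_+)(e^{i\xi}-q_+)$, the spectral equation \eqref{bethe:eq} reads $g(\xi)/g(-\xi)=-1$, so that at a spectral point $R'(\xi)=g(\xi)\,\frac{d}{d\xi}\log\bigl(g(\xi)/g(-\xi)\bigr)$. The crux is to show that this logarithmic derivative equals exactly $iH^{(m)}(\xi)$: after reducing $g(\xi)/g(-\xi)$ to $-e^{2i(m-2)\xi}\prod_{q}\frac{e^{i\xi}-q}{e^{-i\xi}-q}$ over $q\in\{p_-,q_-,p_+,q_+\}$, each paired factor contributes $i\bigl(v_q'(\xi)+1\bigr)$ via the elementary identity $2-2q\cos(\xi)=(1-q^2)+(1-2q\cos(\xi)+q^2)$, and the four resulting $+1$'s combine with $2i(m-2)$ to rebuild $2m+\sum v_q'(\xi)=H^{(m)}(\xi)$. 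Substituting $R'=i\,g\,H^{(m)}$ splits off the factor $H^{(m)}$; eliminating the leftover $c(\xi)^2$-term in $p_m(\xi)g(\xi)$ through $g(-\xi)=-g(\xi)$ and invoking the telescoping identity $-e^{i\xi}(e^{-i\xi}-p_+)(e^{-i\xi}-q_+)+e^{-i\xi}(e^{i\xi}-p_+)(e^{i\xi}-q_+)=2i(1-a_+)\sin(\xi)$ then reduces $-i\,\Delta^{(m)}_m\,p_m(\xi)g(\xi)/(2\sin(\xi))$ to $c(\xi)c(-\xi)$, the weight $\Delta^{(m)}_m=(1-a_+)^{-1}$ cancelling the factor $(1-a_+)$. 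Collecting everything gives $1/\hat{\Delta}^{(m)}_{\hat{l}}=c(\xi^{(m)}_{\hat{l}})c(-\xi^{(m)}_{\hat{l}})H^{(m)}(\xi^{(m)}_{\hat{l}})$, as claimed.
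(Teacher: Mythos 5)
Your argument is correct, and while the off-diagonal case coincides with the paper's (both rest on the symmetry of $\Delta^{(m)}\text{L}^{(m)}$ together with the simplicity of the spectrum guaranteed by Eq.~\eqref{spectrum}), your derivation of the quadratic norm is genuinely different. The paper computes $\sum_{l}p_l^2(\xi)\Delta^{(m)}_l$ head-on by expanding into plane waves, summing the resulting geometric series, and then using the algebraic relation \eqref{bethe:eq} to eliminate $e^{\pm 2im\xi}$ and force the simplification to $c(\xi)c(-\xi)H^{(m)}(\xi)$. You instead localize everything at the right boundary: since $p_\cdot(\xi)$ satisfies the recurrence and the left boundary condition identically in $\xi$, the whole failure of the eigenvalue equation is the single defect $R(\xi)=p_{m+1}(\xi)+a_+p_{m-1}(\xi)-b_+p_m(\xi)$ at $l=m$, the Green identity is exact, and the confluent limit $\eta\to\xi$ expresses the norm through $R'(\xi^{(m)}_{\hat l})$. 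I checked the pieces: the factorization $R=g(\xi)+g(-\xi)$ with $g(\xi)=c(\xi)e^{i(m-1)\xi}(e^{i\xi}-p_+)(e^{i\xi}-q_+)$ is consistent with $a_+=p_+q_+$, $b_+=p_++q_+$; the spectral equation is indeed $g(\xi)/g(-\xi)=-1$; each factor $\log\bigl((e^{i\xi}-q)/(e^{-i\xi}-q)\bigr)$ differentiates to $i(1+v_q'(\xi))$ by the paper's own formula $v_q(\xi)=i\log\bigl((1-qe^{i\xi})/(e^{i\xi}-q)\bigr)$, so the logarithmic derivative is $iH^{(m)}(\xi)$; and the final identity $p_m(\xi)g(\xi)=2i(1-a_+)\sin(\xi)\,c(\xi)c(-\xi)$ at spectral points checks out, with $\Delta^{(m)}_m=(1-a_+)^{-1}$ cancelling as you say. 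What your route buys is conceptual: it exhibits the Plancherel density $H^{(m)}$ as the derivative of the Bethe-type quantization condition \eqref{bethe:a}, in the spirit of Gaudin-type norm formulas, and it would also yield the off-diagonal orthogonality for free (both defects vanish at distinct spectral points), so self-adjointness is not strictly needed. The paper's computation is more elementary and self-contained but hides this structure in an unilluminating simplification. Two small points worth making explicit if you write this up: $g(\xi^{(m)}_{\hat l})\neq 0$ (needed for the logarithmic-derivative step) because $|p_\pm|,|q_\pm|<1$ and $0<\xi^{(m)}_{\hat l}<\pi$, and the limit $\eta\to\xi$ is unproblematic since the inner product is a finite sum.
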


\begin{proof}
Let $\Delta^{(m)}$ denote the positive $(m+1)\times (m+1)$ diagonal matrix
\begin{equation}\label{gauge}
\Delta^{(m)}:= \text{diag} ( \Delta^{(m)}_0,  \Delta^{(m)}_1,\ldots , \Delta^{(m)}_m) .
 \end{equation}
Since the spectrum of $\text{L}^{(m)}$ \eqref{L} is simple by virtue of Proposition \ref{eigenfunctions:prp} and  Eq. \eqref{spectrum},
and the conjugated matrix $(\Delta^{(m)})^{1/2} \text{L}^{(m)} ( \Delta^{(m)})^{-1/2}$ is manifestly symmetric,
it is immediate that the
eigenbasis  $\psi_{{0}}^{(m)},\ldots , \psi_{{m}}^{(m)}$ satisfies the asserted orthogonality relations when $\hat{k}\neq \hat{l}$. On the other hand,
a straightforward  computation entails that
\begin{align*}
\sum_{l\in\Lambda^{(m)}}  p_l^2  (\xi) \Delta^{(m)}_l  =&
\sum_{l\in\Lambda^{(m)}} \left( c(\xi) e^{i\xi l}   +  c(-\xi ) e^{-i\xi l} \right)^2 \Delta^{(m)}_l \\
= &\,   c^2(\xi) \left( \frac{e^{2i\xi}-  e^{2im\xi }  }{1-e^{2i\xi}}  +  \frac{1}{1-a_-} +\frac{e^{2im \xi }}{1-a_+} \right) \\
&+  c^2(-\xi) \left( \frac{e^{-2i\xi}-  e^{-2im\xi }  }{1-e^{-2i\xi}}    +  \frac{1}{1-a_-} +\frac{e^{-2im\xi}}{1-a_+} \right) \\
& + 2 c(\xi ) c(-\xi)  \left(  m-1 +\frac{1}{1-a_-}+\frac{1}{1-a_+}        \right) .
\end{align*}
Upon recalling that at  $\xi=\xi_{\hat{l}}^{(m)}$  the algebraic relation in Eq. \eqref{bethe:eq} holds, we are in the position to rewrite
all instances of $e^{\pm 2im\xi }$  in terms of
\begin{equation*}
\left[ \frac{(1-p_- e^{i\xi})(1-q_-e^{i\xi})(1-p_+ e^{i\xi})(1-q_+e^{i\xi})}{( e^{i\xi}-p_-)(e^{i\xi}-q_-)( e^{i\xi}-p_+)(e^{i\xi}-q_+)} \right]^{\pm 1} .
\end{equation*}
This produces an expression for the quadratic norm that is readily seen to simplify
to $c(\xi)c(-\xi) H^{(m)}(\xi)$, therewith completing the proof of the asserted orthogonality relations when $\hat{k}= \hat{l}$.
\end{proof}

\begin{remark}\label{dual:rem}
Alternatively, Proposition \ref{orthogonality:prp} can be reformulated in terms of the following dual system of finite-dimensional orthogonality relations for the
Bernstein-Szeg\"o polynomials $p_0(\xi),\ldots ,p_m(\xi)$ supported on the nodes $\xi_{0}^{(m)},\ldots ,\xi_{m}^{(m)}$:
\begin{equation}
\sum_{\hat{l}\in\Lambda^{(m)}}   p_l(\xi_{\hat{l}}^{(m)}) p_k(\xi_{\hat{l}}^{(m)}) \hat{\Delta}_{\hat{l}}^{(m)}
=\begin{cases}
0 & \text{if}\ k\neq l \\
1/ \Delta^{(m)}_{l} &\text{if}\ k=l
\end{cases} 
\end{equation}
($l,k\in\Lambda^{(m)}$). 
\end{remark}

\subsection{Discrete Fourier transforms}\label{dft:sec}
Let $\ell^2(\Lambda^{(m)})$ denote the Hilbert space of $f\in C(\Lambda^{(m)})$ endowed with the standard inner product
\begin{equation}
\langle f ,g\rangle_{(m)}:= \sum_{l\in \Lambda^{(m)}}  f(l) \overline{g(l)} 
\end{equation}
(where the bar indicates the complex conjugate).
It is immediate from Proposition  \ref{orthogonality:prp} that the discrete Fourier transform $\mathcal{F}^{(m)}:\ell^2(\Lambda^{(m)})\to \ell^2(\Lambda^{(m)})$ with kernel
\begin{equation}
\Psi^{(m)}_{\hat{l},l} :=\sqrt{\hat{\Delta}_{\hat{l}}^{(m)}\Delta_l^{(m)}} \psi^{(m)}_{\hat{l}}(l) = \sqrt{\hat{\Delta}_{\hat{l}}^{(m)}\Delta_l^{(m)}}   p_l(\xi_{\hat{l}}^{(m)} )
\end{equation} 
($\hat{l},l\in\Lambda^{(m)}$)---given explicitly by the Fourier pairing
\begin{subequations}
\begin{equation}\label{dft:a}
\hat{f}({\hat{l}})= (\mathcal{F}^{(m)} f)({\hat{l}}) := \langle f , \Psi^{(m)}_{\hat{l},\cdot }\rangle_{(m)}= \sum_{l\in\Lambda^{(m)}} \Psi^{(m)}_{\hat{l},l } f(l) 
\end{equation}
($f\in \ell^2(\Lambda^{(m)})$, $ \hat{l}\in\Lambda^{(m)}$)---constitutes a unitary transformation in $\ell^2(\Lambda^{(m)})$ with an inversion formula of the form
\begin{equation}\label{dft:b}
f (l)=  ((\mathcal{F}^{(m)})^{-1} \hat{f})(l)=  \langle \hat{f} , \Psi^{(m)}_{\cdot ,l}\rangle_{(m)}= \sum_{\hat{l}\in\Lambda^{(m)}} \Psi^{(m)}_{\hat{l},l } \hat{f}(\hat{l}) 
\end{equation}
\end{subequations}
($\hat{f}\in \ell^2(\Lambda^{(m)})$, ${l}\in\Lambda^{(m)}$). (In these pairings the subscript dots in $ \Psi^{(m)}_{\hat{l},\cdot }$ and $\Psi^{(m)}_{\cdot ,l}$  indicate the slots corresponding to the variable.)

% Please add the following required packages to your document preamble:
% \usepackage{booktabs}
% \usepackage{multirow}
\begin{table}[]
\centering
\caption{Discrete (co)sine transforms corresponding to boundary conditions of Neumann and Dirichlet type}
\label{DCT-DST:table}
\begin{tabular}{@{}lc|cccc|c@{}}
\toprule
                                        &        & \multicolumn{2}{c|}{N}                                   & \multicolumn{2}{c|}{D}              & \multicolumn{1}{l}{}        \\ \midrule
                                        & $(p_\pm,q_\pm)$  & \multicolumn{1}{c|}{(-1,1)} & \multicolumn{1}{c|}{(0,1)} & \multicolumn{1}{c|}{(0,0)} & (-1,0) & +/-                         \\ \midrule
\multicolumn{1}{l|}{\multirow{2}{*}{\vspace{-1ex} N}} & (-1,1) & DCT-1                       & DCT-5                      & DCT-3                      & DCT-7  & \multicolumn{1}{c|}{(-1,0)} \\ \cmidrule(lr){2-2} \cmidrule(l){7-7} 
\multicolumn{1}{l|}{}                   & (0,1)  & DCT-6                       & DCT-2                      & DCT-8                      & DCT-4  & \multicolumn{1}{c|}{(0,1)}  \\ \cmidrule(r){1-2} \cmidrule(l){7-7} 
\multicolumn{1}{l|}{\multirow{2}{*}{\vspace{-1ex} D}} & (0,0)  & DST-3                       & DST-7                      & DST-1                      & DST-5  & \multicolumn{1}{c|}{(0,0)}  \\ \cmidrule(lr){2-2} \cmidrule(l){7-7} 
\multicolumn{1}{l|}{}                   & (-1,0) & DST-8                       & DST-4                      & DST-6                      & DST-2  & \multicolumn{1}{c|}{(0,-1)} \\ \midrule
                                        & -/+    & \multicolumn{1}{c|}{(-1,0)} & \multicolumn{1}{c|}{(0,1)} & \multicolumn{1}{c|}{(0,0)} & (0,-1) & $(a_\pm,b_\pm) $                       \\ \bottomrule
\end{tabular}
\end{table}

By performing the limits to the boundary parameters values encoding Neumann and Dirichlet type boundary conditions in accordance with Table \ref{DCT-DST:table}, the
discrete Fourier transform in Eqs. \eqref{dft:a}, \eqref{dft:b} is seen to degenerate into the sixteen standard  discrete (co)sine transforms
DCT-$k$ and DST-$k$ ($k=1,\ldots ,8$) \cite{bri-yip-rao:discrete}.

\begin{itemize}
\item[DCT-1:] $(p_\pm,q_\pm)\to (-1,1)\Rightarrow$
 \begin{equation*}\textstyle
 \xi_{\hat{l}}^{(m)}\to  \frac{\pi\hat{l}}{m} \, , \qquad
  \Psi_{\hat{l},l}^{(m)}\to \sqrt{\frac{2}{m}}\cos \bigl(\frac{\pi\hat{l}l}{m}\bigr) \bigl(\frac{1}{\sqrt{2}}\bigr)^{\delta_{\hat{l}}+\delta_{m-\hat{l}}+\delta_l+\delta_{m-l}}
\end{equation*}

\item[DCT-2:]
$(p_\pm,q_\pm)\to (0,1)\Rightarrow$
 \begin{equation*}\textstyle
 \xi_{\hat{l}}^{(m)}\to  \frac{\pi\hat{l}}{m+1}\, , \qquad
  \Psi_{\hat{l},l}^{(m)}\to \sqrt{\frac{2}{m+1}}\cos \bigl(\frac{\pi\hat{l}(l+\frac{1}{2})}{m+1}\bigr) \bigl(\frac{1}{\sqrt{2}}\bigr)^{\delta_{\hat{l}}}
\end{equation*}

\item[DCT-3:] $(p_-,q_-)\to (-1,1)$ and $(p_+,q_+)\to (0,0)\Rightarrow$
 \begin{equation*}\textstyle
 \xi_{\hat{l}}^{(m)}\to  \frac{\pi(\hat{l}+\frac12)}{m+1}\, , \qquad
  \Psi_{\hat{l},l}^{(m)}\to \sqrt{\frac{2}{m+1}}\cos \bigl(\frac{\pi (\hat{l}+\frac{1}{2})l}{m+1}\bigr) \bigl(\frac{1}{\sqrt{2}}\bigr)^{\delta_{l}}
\end{equation*}

\item[DCT-4:] $(p_-,q_-)\to (0,1)$ and $(p_+,q_+)\to (-1,0)\Rightarrow$
 \begin{equation*}\textstyle
 \xi_{\hat{l}}^{(m)}\to  \frac{\pi(\hat{l}+\frac12)}{m+1}\, , \qquad
  \Psi_{\hat{l},l}^{(m)}\to \sqrt{\frac{2}{m+1}}\cos \bigl(\frac{\pi (\hat l+\frac12)(l+\frac{1}{2})}{m+1}\bigr)
\end{equation*}

\item[DCT-5:] $(p_-,q_-)\to (-1,1)$ and $(p_+,q_+)\to (0,1)\Rightarrow$

 \begin{equation*}\textstyle
 \xi_{\hat{l}}^{(m)}\to  \frac{\pi \hat{l}}{m+\frac12}\, , \qquad
  \Psi_{\hat{l},l}^{(m)}\to \sqrt{\frac{2}{m+\frac12}}\cos \bigl(\frac{\pi l \hat{l}}{m+\frac12}\bigr)
      \bigl(\frac{1}{\sqrt{2}}\bigr)^{\delta_{l}+\delta_{\hat l}}
\end{equation*}

\item[DCT-6:] $(p_-,q_-)\to (0,1)$ and $(p_+,q_+)\to (-1,1)\Rightarrow$

 \begin{equation*}\textstyle
 \xi_{\hat{l}}^{(m)}\to  \frac{\pi \hat{l}}{m+\frac12}
     \, , \qquad
  \Psi_{\hat{l},l}^{(m)}\to \sqrt{\frac{2}{m+\frac12}}\cos \bigl(\frac{\pi  \hat{l} (l+\frac12)}{m+\frac12}\bigr)
      \bigl(\frac{1}{\sqrt{2}}\bigr)^{\delta_{\hat l}+\delta_{m- l}}
\end{equation*}

\item[DCT-7:] $(p_-,q_-)\to (-1,1)$ and $(p_+,q_+)\to (-1,0)\Rightarrow$

 \begin{equation*}\textstyle
 \xi_{\hat{l}}^{(m)}\to  \frac{\pi (\hat{l} +\frac12)}{m+\frac12}
     \, , \qquad
  \Psi_{\hat{l},l}^{(m)}\to \sqrt{\frac{2}{m+\frac12}}\cos \bigl(\frac{\pi   (\hat l+\frac12) l }{m+\frac12}\bigr)
     \bigl(\frac{1}{\sqrt{2}}\bigr)^{\delta_{m-\hat l}+\delta_{l}}
\end{equation*}

\item[DCT-8:] $(p_-,q_-)\to (0,1)$ and $(p_+,q_+)\to (0,0)\Rightarrow$

 \begin{equation*}\textstyle
 \xi_{\hat{l}}^{(m)}\to  \frac{\pi (\hat{l} +\frac12)}{m+\frac32}
     \, , \qquad
  \Psi_{\hat{l},l}^{(m)}\to \sqrt{\frac{2}{m+\frac32}}\cos \bigl(\frac{\pi   (\hat l+\frac12) (l+\frac12) }{m+\frac32}\bigr)
\end{equation*}

\end{itemize}

\begin{itemize}
\item[DST-1:] $(p_\pm,q_\pm)\to (0,0)\Rightarrow$

\begin{equation*}\textstyle
 \xi_{\hat{l}}^{(m)}\to  \frac{\pi (\hat l + 1)}{m+2}  
   \, , \qquad
  \Psi_{\hat{l},l}^{(m)}\to \sqrt{\frac{2}{m+2}}\sin \bigl(\frac{\pi (\hat l + 1)(l+1)}{m+2}\bigr)
\end{equation*}

\item[DST-2:] $(p_\pm,q_\pm)\to (-1,0)\Rightarrow$

\begin{equation*}\textstyle
 \xi_{\hat{l}}^{(m)}\to  \frac{\pi (\hat l + 1)}{m+1}  
   \, , \qquad
  \Psi_{\hat{l},l}^{(m)}\to \sqrt{\frac{2}{m+1}}\sin \bigl(\frac{\pi (\hat l + 1)(l+\frac12)}{m+1}\bigr)
        \bigl(\frac{1}{\sqrt{2}}\bigr)^{\delta_{m-\hat l}}
\end{equation*}

\item[DST-3:] $(p_-,q_-)\to (0,0)$ and $(p_+,q_+)\to (-1,1)\Rightarrow$

\begin{equation*}\textstyle
 \xi_{\hat{l}}^{(m)}\to  \frac{\pi (\hat l + \frac12)}{m+1}  
   \, , \qquad
  \Psi_{\hat{l},l}^{(m)}\to \sqrt{\frac{2}{m+1}}\sin \bigl(\frac{\pi (\hat l + \frac12)(l+1)}{m+1}\bigr)
          \bigl(\frac{1}{\sqrt{2}}\bigr)^{\delta_{m-l} }
\end{equation*}

\item[DST-4:] $(p_-,q_-)\to (-1,0)$ and $(p_+,q_+)\to (0,1)\Rightarrow$

\begin{equation*}\textstyle
 \xi_{\hat{l}}^{(m)}\to  \frac{\pi (\hat l + \frac12)}{m+1}  
   \, , \qquad
  \Psi_{\hat{l},l}^{(m)}\to \sqrt{\frac{2}{m+1}}\sin \bigl(\frac{\pi (\hat l + \frac12)(l+\frac12)}{m+1}\bigr)
\end{equation*}

\item[DST-5:] $(p_-,q_-)\to (0,0)$ and $(p_+,q_+)\to (-1,0)\Rightarrow$

\begin{equation*}\textstyle
 \xi_{\hat{l}}^{(m)}\to  \frac{\pi (\hat l + 1)}{m+\frac32}  
   \, , \qquad
  \Psi_{\hat{l},l}^{(m)}\to \sqrt{\frac{2}{m+\frac32}}\sin \bigl(\frac{\pi (\hat l + 1)(l+1)}{m+\frac32}\bigr)
\end{equation*}

\item[DST-6:] $(p_-,q_-)\to (-1,0)$ and $(p_+,q_+)\to (0,0)\Rightarrow$

\begin{equation*}\textstyle
 \xi_{\hat{l}}^{(m)}\to  \frac{\pi (\hat l + 1)}{m+\frac32}  
   \, , \qquad
  \Psi_{\hat{l},l}^{(m)}\to \sqrt{\frac{2}{m+\frac32}}\sin \bigl(\frac{\pi (\hat l + 1)(l+\frac12)}{m+\frac32}\bigr)
\end{equation*}

\item[DST-7:] $(p_-,q_-)\to (0,0)$ and $(p_+,q_+)\to (0,1)\Rightarrow$

\begin{equation*}\textstyle
 \xi_{\hat{l}}^{(m)}\to  \frac{\pi (\hat l + \frac12)}{m+\frac32}  
   \, , \qquad
  \Psi_{\hat{l},l}^{(m)}\to \sqrt{\frac{2}{m+\frac32}}\sin \bigl(\frac{\pi (\hat l + \frac12)(l+1)}{m+\frac32}\bigr)
\end{equation*}

\item[DST-8:] $(p_-,q_-)\to (-1,0)$ and $(p_+,q_+)\to (-1,1)\Rightarrow$

\begin{equation*}\textstyle
 \xi_{\hat{l}}^{(m)}\to  \frac{\pi (\hat l + \frac12)}{m+\frac12}  
   \, , \qquad
  \Psi_{\hat{l},l}^{(m)}\to \sqrt{\frac{2}{m+\frac12}}\sin \bigl(\frac{\pi (\hat l + \frac12)(l+\frac12)}{m+\frac12}\bigr)
        \bigl(\frac{1}{\sqrt{2}}\bigr)^{\delta_{m-\hat l} + \delta_{m-l} }
\end{equation*}
\end{itemize}

To verify the above limit transitions, one first computes the limiting values of the spectral points
$\xi_{\hat{l}}^{(m)}$ by means of Eq.  \eqref{bethe:a}, cf. Remark \ref{limit-to-ND-spectrum:rem} below for some further details. The limits of the eigenfunctions  $\psi_{\hat{l}}^{(m)}(l)$ then follow via the representation  of $p_l(\xi)$ \eqref{e-vector:b} in terms of Chebyshev polynomials. To recover the limits of the Fourier kernel
 $\Psi_{\hat{l},l}^{(m)}$ it remains to compute the limits of the weights
 $\Delta^{(m)}_l$ and $\hat\Delta^{(m)}_{\hat l}$. While for  $\Delta^{(m)}_l$ this is trivial, for $\hat\Delta^{(m)}_{\hat l}$  the limit in question is straightforward from the explicit expressions only when the limiting value of
 $\xi_{\hat{l}}^{(m)}$ amounts  to an interior point of  the interval $[0,\pi]$.  On the other hand, if $\xi_{\hat{l}}^{(m)}$ converges to a boundary point then $\bigr(\psi_{\hat{l}}^{(m)}(l)\bigl)^2$ converges to a constant function, whence the limiting behavior of $\hat\Delta^{(m)}_{\hat l}$ is plain from Eq. \eqref{orthogonality:a} in this situation.

\begin{remark}\label{limit-to-ND-spectrum:rem}
To compute the limiting values of the spectral points $\xi^{(m)}_{\hat{l}}$ corresponding to the values of the boundary parameters in Table \ref{DCT-DST:table},
 one uses that for $0<\xi <\pi$:
 \begin{equation}\label{limit-v}
 \lim_{q\to \varepsilon} v_q(\xi)=
 \begin{cases}   
0 & \text{if}\  \varepsilon=-1, \\
 \xi & \text{if}\  \varepsilon=0 ,\\
 \pi & \text{if}\  \varepsilon =1 
 \end{cases} 
 \end{equation}
 (uniformly on compacts). Indeed, it is read-off from Eq. \eqref{bethe:a} that (i)
 $\xi^{(m)}_{\hat l}\to 0$  if $\hat l=0$ and $q_-,q_+\to 1$,  (ii) $\xi^{(m)}_{\hat l}\to \pi$  if $\hat l=m$ and $p_-,p_+\to -1$, and (iii)
 that there exists an $\epsilon >0$ (depending only on $m$) such that 
 $ \epsilon \le \xi^{(m)}_{\hat l} \le \pi-\epsilon
 $
 in all other cases.
We thus conclude (from (i),(ii), and (iii) in combination with Eq. \eqref{bethe:a} and the locally uniform convergence \eqref{limit-v}) that
$$
\xi^{(m)}_{\hat l} \to \frac{\pi (\hat l +1 -\frac{1}{2}N_1)}{m +\frac{1}{2}N_0}  ,
$$
where $N_0$ and $N_1$ indicate the number of the parameters $p_-,q_-,p_+,q_+$ that converge to $0$ and $1$, respectively.
\end{remark}

\section{Multivariate generalization via generalized Schur polynomials}\label{sec3}
Upon identifying
Macdonald's ninth variation of the Schur polynomials
\cite{mac:schur,nak-nou-shi-yam:tableau,ser-ves:jacobi} associated with the two-parameter Bernstein-Szeg\"o family in Eq. \eqref{e-vector:b}
as a $t\to 0$ parameter degeneration of
Macdonald's three-parameter hyperoctahedral Hall-Littlewood polynomials associated with the root system $BC_n$
\cite[\S 10]{mac:orthogonal},
we arrive at a multivariate generalization of the discrete Fourier transform in Section \ref{sec2}. 
For the special parameter values corresponding to the standard boundary conditions of Dirichlet- and Neumann type, the construction then produces
multivariate generalizations of the pertinent discrete (co)sine transforms.
The latter transforms belong to a much wider class of multivariate discrete (co)sine transforms that was studied systematically by
 Klimyk, Moody and Patera et al. \cite{kli-pat:anti-b,moo-pat:cubature,moo-mot-pat:gaussian,czy-hri:generalized,hri-mot:discrete}  within the framework of (affine) root systems.
 From this perspective,
 the discrete (co)sine transforms emerging here pertain to the root system $BC_n$.

\subsection{Generalized Schur polynomials}
For any $\lambda=(\lambda_1,\ldots ,\lambda_n)\in\mathbb{Z}^n$ with weakly decreasing nonnegative parts
\begin{equation*}
\lambda_1\geq\lambda_2\geq\cdots \geq\lambda_n\geq 0,
\end{equation*}
the generalized Schur polynomial $P_\lambda(\boldsymbol{\xi}) $ in $\boldsymbol{\xi}:=(\xi_1,\ldots,\xi_n)$ associated with the two-parameter Bernstein-Szeg\"o family $p_l(\xi)$ \eqref{e-vector:b} is defined via the determinantal formula
%\begin{subequations}
\begin{equation}\label{Schur:det}
P_\lambda(\boldsymbol{\xi}) := \frac{1}{V(\boldsymbol{\xi})} \det [ p_{n-j+\lambda_j}(\xi_k)]_{1\leq j,k\leq n} ,
\end{equation}
where $V(\boldsymbol{\xi})$ refers to the Vandermonde determinant
\begin{equation*}
V(\boldsymbol{\xi})=\prod_{1\leq j <k \leq n}   2\bigl( \cos(\xi_j)-\cos(\xi_k) \bigr) .
\end{equation*}
%\end{subequations}
Notice that if we replace $p_l (\xi)$ by $ z^l$ with $z=2\cos(\xi)$ on the RHS, then Eq. \eqref{Schur:det} reproduces precisely the celebrated determinantal representation defining the
classical Schur polynomial
$s_\lambda(z_1,\ldots ,z_n )$ in variables $z_j=2\cos(\xi_j)$, $j=1,\ldots ,n$ \cite{mac:schur}. Moreover, by expanding the determinant in the numerator of Eq. \eqref{Schur:det} it is seen that $P_\lambda (\boldsymbol{\xi})$ amounts to the following multivariate version of the Bernstein-Szeg\"o polynomial $p_l(\xi)$ \eqref{e-vector:b}:
\begin{subequations}
\begin{equation}\label{Schur:exp}
P_\lambda(\boldsymbol{\xi}) =
 \sum_{( \sigma,\epsilon)\in S_n\ltimes \{ 1,-1\}^n}   C(\epsilon_1 \xi_{\sigma_1},\ldots , \epsilon_n \xi_{\sigma_n})
\exp (i\epsilon_1 \xi_{\sigma_1}\lambda_1+\cdots +i \epsilon_n \xi_{\sigma_n} \lambda_n) 
\end{equation}
with
\begin{eqnarray}\label{Cf}
\lefteqn{C(\xi_1,\ldots ,\xi_n)= C(\boldsymbol{\xi}) :=
\prod_{1\leq j\leq n} \frac{(1-p_- e^{-i\xi_j})( 1- q_- e^{-i\xi_j})}{1-e^{-2i\xi_j}}} && \\
&& \times \prod_{1\leq j<k \leq n} (1-e^{-i(\xi_{j}+\xi_k)})^{-1} (1-e^{-i(\xi_{j}-\xi_k)})^{-1}.\nonumber
\end{eqnarray}
\end{subequations}
In this explicit representation the summation is meant over all signed permutations $(\sigma,\epsilon)$ with $\sigma= { \bigl( \begin{smallmatrix}1& 2& \cdots & n \\
 \sigma_1&\sigma_2&\cdots & \sigma_n
 \end{smallmatrix}\bigr)}$ belonging to the symmetric group $S_n$ and 
$\epsilon=(\epsilon_1,\ldots,\epsilon_n)\in \{ 1,-1\}^n$.
It is evident from Eqs. \eqref{Schur:exp}, \eqref{Cf} that the two-parameter generalized Schur polynomial $P_\lambda (\boldsymbol{\xi})$  \eqref{Schur:det} boils down to a parameter specialization (viz. $t\to 0$) of Macdonald's three-parameter hyperoctahedral Hall-Littlewood polynomial associated with the root system $BC_n$ \cite[\S 10]{mac:orthogonal}.

\subsection{Plancherel formula}
To any $\hat{\lambda}$ belonging to
\begin{equation}\label{FD}
\Lambda^{(m,n)}:=\{  \lambda=(\lambda_1,\ldots,\lambda_n) \in\mathbb{Z}^n \mid m\geq \lambda_1\geq\lambda_2\geq\cdots \geq\lambda_n\geq 0\} 
\end{equation}
we now associate a lattice function  $\psi^{(m,n)}_{\hat{\lambda}}$ in the space $ C(\Lambda^{(m,n)})$ of functions $f:\Lambda^{(m,n)}\to\mathbb{C}$, which is given by
\begin{subequations}
\begin{equation}\label{e-vector-n:a}
\psi^{(m,n)}_{\hat{\lambda}}(\lambda):= P_\lambda \bigl( \boldsymbol{\xi}^{(m,n)}_{\hat{\lambda}}\bigr)
\end{equation}
($\lambda\in\Lambda^{(m,n)}$) with
\begin{equation}\label{e-vector-n:b}
\boldsymbol{\xi}^{(m,n)}_{\hat{\lambda}}:=\left(\xi^{(m+n-1)}_{\hat{\lambda}_1+n-1},\xi^{(m+n-1)}_{\hat{\lambda}_2+n-2},\ldots ,\xi^{(m+n-1)}_{\hat{\lambda}_{n-1}+1},\xi^{(m+n-1)}_{\hat{\lambda}_n}\right)
\end{equation}
\end{subequations}
($\hat{\lambda}\in \Lambda^{(m,n)}$).
The following theorem---which reveals that the functions $\psi^{(m,n)}_{\hat{\lambda}}$, $\hat{\lambda}\in \Lambda^{(m,n)}$ constitute an orthogonal basis
of $C(\Lambda^{(m,n)})$ with respect to the weights 
\begin{equation}\label{weights-n}
\Delta^{(m,n)}_\lambda :=\prod_{1\leq j \leq n} \Delta^{(m+n-1)}_{n-j+\lambda_j}=
(1-a_- \delta_{\lambda_n})^{-1} (1-a_+\delta_{m-\lambda_1})^{-1}
\end{equation}
($\lambda\in \Lambda^{(m,n)}$), and which in addition computes the corresponding Plancherel measure explicitly---generalizes
Proposition \ref{orthogonality:prp} to the situation of an arbitrary number of variables $n$.

\begin{theorem}[Orthogonality Relations]\label{orthogonality-n:thm}
For any $\hat{\lambda},\hat{\mu}\in\Lambda^{(m,n)}$ and parameters of the form Eqs. \eqref{parameters:a}, \eqref{parameters:b}, one has that
\begin{subequations}
\begin{equation}\label{orhogonality-n:a}
\sum_{\lambda\in\Lambda^{(m,n)}}  \psi^{(m,n)}_{\hat{\lambda}}  (\lambda) \psi^{(m,n)}_{\hat{\mu}} (\lambda) \Delta^{(m,n)}_\lambda =
\begin{cases}
0 & \text{if}\ \hat{\mu}\neq \hat{\lambda} \\
1/ \hat{\Delta}^{(m,n)}_{\hat{\lambda}} &\text{if}\ \hat{\mu}=\hat{\lambda}
\end{cases} ,
\end{equation}
where
\begin{equation}
 \hat{\Delta}^{(m,n)}_{\hat{\lambda}} := \frac{1}{C\bigl( \boldsymbol{\xi}^{(m,n)}_{\hat{\lambda}} \bigr) C\bigl(-\ \boldsymbol{\xi}^{(m,n)}_{\hat{\lambda}} \bigr)   H^{(m,n)}\bigl( \boldsymbol{\xi}^{(m,n)}_{\hat{\lambda}} \bigr)},
\end{equation}
with $C(\boldsymbol{\xi})$ taken from Eq. \eqref{Cf} and
\begin{equation}
H^{(m,n)}(\boldsymbol{\xi}):= \prod_{1\leq j\leq n} H^{(m+n-1)}(\xi_j) .
\end{equation}
\end{subequations}
\end{theorem}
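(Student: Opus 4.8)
The plan is to reduce the multivariate orthogonality in \eqref{orhogonality-n:a} to the univariate relations of Proposition~\ref{orthogonality:prp}, taken at level $m+n-1$, by means of the discrete Cauchy--Binet (Andr\'eief) summation formula. First I would substitute the determinantal formula \eqref{Schur:det} for $P_\lambda$ into the left-hand side of \eqref{orhogonality-n:a}. Writing $\nu_j:=\lambda_j+n-j$, $\hat\alpha_k:=\hat\lambda_k+n-k$ and $\hat\beta_k:=\hat\mu_k+n-k$, the assignment $\lambda\mapsto(\nu_1>\cdots>\nu_n)$ is a bijection from $\Lambda^{(m,n)}$ onto the strictly decreasing tuples in $\{0,1,\ldots,m+n-1\}$; the evaluation point \eqref{e-vector-n:b} has $k$th component $\xi^{(m+n-1)}_{\hat\alpha_k}$; and the weight \eqref{weights-n} factorizes as $\Delta^{(m,n)}_\lambda=\prod_{j}\Delta^{(m+n-1)}_{\nu_j}$. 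Since the two Vandermonde denominators $V(\boldsymbol\xi^{(m,n)}_{\hat\lambda})$ and $V(\boldsymbol\xi^{(m,n)}_{\hat\mu})$ do not depend on $\lambda$, they factor out of the sum, leaving a weighted sum over strictly decreasing $\nu$ of a product of two determinants with entries $p_{\nu_j}(\xi^{(m+n-1)}_{\hat\alpha_k})$ and $p_{\nu_j}(\xi^{(m+n-1)}_{\hat\beta_k})$.

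Second, after transposing so that the label index becomes the row index, I would apply the Andr\'eief identity $\sum_{\nu_1>\cdots>\nu_n}\det[f_k(\nu_j)]\det[g_{k'}(\nu_j)]\prod_j w(\nu_j)=\det\bigl[\sum_\nu f_k(\nu)g_{k'}(\nu)w(\nu)\bigr]$ with $w=\Delta^{(m+n-1)}$, collapsing the sum into a single $n\times n$ determinant whose $(k,k')$ entry is $\sum_{\nu=0}^{m+n-1}p_\nu(\xi^{(m+n-1)}_{\hat\alpha_k})p_\nu(\xi^{(m+n-1)}_{\hat\beta_{k'}})\Delta^{(m+n-1)}_\nu$. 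This entry is precisely the univariate pairing of Proposition~\ref{orthogonality:prp} at level $m+n-1$, so it equals $1/\hat\Delta^{(m+n-1)}_{\hat\alpha_k}$ when $\hat\alpha_k=\hat\beta_{k'}$ and vanishes otherwise.

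Third, I would read off the two cases. Because $\hat\alpha$ and $\hat\beta$ are strictly decreasing, the matrix $[\delta_{\hat\alpha_k,\hat\beta_{k'}}]_{k,k'}$ can be (a multiple of) a permutation matrix only if the two tuples coincide, that is $\hat\lambda=\hat\mu$; otherwise some value is absent from one tuple, producing a vanishing row or column and hence a vanishing determinant, which yields the off-diagonal orthogonality. When $\hat\lambda=\hat\mu$ the determinant is diagonal, equal to $\prod_{k}1/\hat\Delta^{(m+n-1)}_{\hat\alpha_k}=\prod_k c(\xi^{(m+n-1)}_{\hat\alpha_k})c(-\xi^{(m+n-1)}_{\hat\alpha_k})H^{(m+n-1)}(\xi^{(m+n-1)}_{\hat\alpha_k})$. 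It then remains to match the prefactors. The factor $H^{(m,n)}=\prod_k H^{(m+n-1)}(\xi_k)$ is immediate by definition, so the only genuine computation is the identity
\[
C(\boldsymbol\xi)\,C(-\boldsymbol\xi)=\frac{\prod_{k}c(\xi_k)c(-\xi_k)}{V(\boldsymbol\xi)^2},
\]
which follows from \eqref{Cf} once one checks, for each pair $j<k$, that $(1-e^{-i(\xi_j+\xi_k)})(1-e^{i(\xi_j+\xi_k)})(1-e^{-i(\xi_j-\xi_k)})(1-e^{i(\xi_j-\xi_k)})=\bigl(2(\cos\xi_j-\cos\xi_k)\bigr)^2$. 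Assembling these pieces with the two Vandermonde denominators identifies the diagonal value with $1/\hat\Delta^{(m,n)}_{\hat\lambda}$.

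The structural heart is the Andr\'eief step, and its applicability rests on the observation that $\Lambda^{(m,n)}$ is in bijection with the strictly decreasing index tuples demanded by the identity, so that no separate symmetrization is needed; verifying this bijection together with the weight factorization $\Delta^{(m,n)}_\lambda=\prod_j\Delta^{(m+n-1)}_{\nu_j}$ is the main point to get right. The off-diagonal vanishing is then immediate from the strict decrease of $\hat\alpha,\hat\beta$, and the diagonal normalization reduces to the elementary trigonometric identity displayed above.
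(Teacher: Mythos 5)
Your proof is correct and takes essentially the same route as the paper's: a Cauchy--Binet (Andr\'eief) collapse of the sum of products of determinants into a single determinant of univariate pairings, which is then evaluated via the $n=1$ orthogonality. The only cosmetic difference is that you sum over $\lambda$ directly and invoke Proposition \ref{orthogonality:prp}, whereas the paper proves the dual relations (summing over $\hat{\lambda}$, using the normalized kernel and Remark \ref{dual:rem}) and then appeals to row--column duality; your explicit check that $C(\boldsymbol{\xi})C(-\boldsymbol{\xi})=\prod_{k}c(\xi_k)c(-\xi_k)/V(\boldsymbol{\xi})^2$ supplies the factorization the paper records as Remark \ref{factorization-plancherel-measure:rem}.
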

The proof of these orthogonality relations---the details of which are relegated to Section \ref{orthogonality-n:prf} below---hinges on the Cauchy-Binet formula.

\begin{remark}\label{factorization-plancherel-measure:rem}
The weights of the Plancherel measure in Theorem \ref{orthogonality-n:thm} admit a factorization of the form
\begin{equation}\label{factorization-plancherel-measure:eq}
\hat{\Delta}^{(m,n)}_{\hat{\lambda}}= V(\boldsymbol{\xi}^{(m,n)}_{\hat{\lambda}} )^2 \prod_{1\leq j\leq n}  \hat{\Delta}^{(m+n-1)}_{n-j+\hat{\lambda}_j}
\end{equation}
($\hat{\lambda}\in \Lambda^{(m,n)}$).
\end{remark}

\begin{remark}\label{orthogonality-n:rem} The corresponding dual description of Theorem \ref{orthogonality-n:thm}---extending Remark \ref{dual:rem} to the case $n>1$---is encoded by the
following
finite-dimensional system of
orthogonality relations for $P_\lambda(\boldsymbol{\xi})$, $\lambda\in \Lambda^{(m,n)}$ supported at the nodes $\boldsymbol{\xi}^{(m,n)}_{\hat{\lambda}}$, $\hat{\lambda}\in\Lambda^{(m,n)}$:
\begin{equation}
\sum_{\hat{\lambda}\in\Lambda^{(m,n)}}   P_\lambda (\boldsymbol{\xi}_{\hat{\lambda}}^{(m,n)}) P_\mu(\boldsymbol{\xi}_{\hat{\lambda}}^{(m,n)}) \hat{\Delta}_{\hat{\lambda}}^{(m,n)}
=\begin{cases}
0 & \text{if}\ \mu\neq \lambda \\
1/ \Delta^{(m,n)}_{\lambda} &\text{if}\ \mu=\lambda
\end{cases} 
\end{equation}
($\lambda,\mu\in\Lambda^{(m,n)}$). 
\end{remark}

\begin{remark} It follows from \cite[Rem. 3.7]{die-ems-zur:completeness}  that $\psi^{(m,n)}_{\hat{\lambda}}$ \eqref{e-vector-n:a}, \eqref{e-vector-n:b}
obeys the following eigenvalue equation generalizing Eq. \eqref{eigenfunctions:eq}:
\begin{subequations}
\begin{equation}
\text{L}^{(m,n)} \psi^{(m,n)}_{\hat{\lambda}}= E \bigl(\boldsymbol{\xi}^{(m,n)}_{\hat{\lambda}}\bigr) \psi^{(m,n)}_{\hat{\lambda}} \qquad (\hat{\lambda}\in\Lambda^{(m,n)}).
\end{equation}
Here $\text{L}^{(m,n)}$  denotes a discrete Laplacian whose action on $f\in C(\Lambda^{(m,n)})$
evaluated at $\lambda\in\Lambda^{(m,n)}$ is given by
\begin{align}\label{Lmn}
& (\text{L}^{(m,n)} f)(\lambda)
= \Bigl( b_-\delta_{\lambda_n} +
b_+\delta_{m-\lambda_{1}}\Bigr)
f(\lambda)  +\\
&\sum_{\substack{1\leq j \leq n\\ \lambda+e_j\in\Lambda^{(m,n)}}}
(1-a_-\delta_{\lambda_j} )^{\delta_{n-j}}
f(\lambda+e_j) 
+\sum_{\substack{1\leq j \leq n\\ \lambda-e_j\in\Lambda^{(m,n)}}}
(1-a_+\delta_{m-\lambda_j} )^{\delta_{j-1}}
 f(\lambda-e_j) \nonumber
\end{align} 
(with the vectors $e_1,\ldots ,e_n$ representing the standard unit basis of $\mathbb{Z}^n$), while its eigenvalues are governed by
\begin{equation}
E (\boldsymbol{\xi}):= \sum_{1\leq j\leq n} 2\cos(\xi_j) .
\end{equation}
\end{subequations}
In other words, 
the orthogonal basis $\psi^{(m,n)}_{\hat{\lambda}}$, $\hat{\lambda}\in\Lambda^{(m,n)}$ diagonalizes the self-adjoint Laplacian $\text{L}^{(m,n)}$ \eqref{Lmn}
in the $\ell^2$-space over $\Lambda^{(m,n)}$ \eqref{FD} determined by the  weights $\Delta^{(m,n)}_\lambda$ \eqref{weights-n}.
 The  Laplacian at issue has its origin in a mathematical physics context as the $n$-particle Hamiltonian for
a phase model describing strongly correlated bosons on the finite one-dimensional aperiodic lattice $\Lambda^{(m)}$ endowed with open-end boundary interactions \cite[\text{Rem.} 2.2]{die-ems-zur:completeness}.  Previously, related quantum Hamiltonians modeling analogous bosonic $n$-particle systems on the  periodic one-dimensional lattice $\mathbb{Z}/m\mathbb{Z}$ were shown to be diagonalizable by means of  standard Schur polynomials $s_\lambda (z_1,\ldots ,z_n)$
\cite{bog:boxed,die:diagonalization,kor-stro:slnk}.
\end{remark}

\subsection{Discrete Fourier transforms}
By building the normalized kernel
\begin{align}\label{fourier-kernel-n}
\Psi^{(m,n)}_{\hat{\lambda},\lambda } :=& \sqrt{\hat{\Delta}_{\hat{\lambda}}^{(m,n)}\Delta_\lambda^{(m,n)}} \psi^{(m,n)}_{\hat{\lambda}}(\lambda ) = \sqrt{\hat{\Delta}_{\hat{\lambda}}^{(m,n)}\Delta_\lambda^{(m,n)}}
 P_\lambda\bigl(\boldsymbol{\xi}_{\hat{\lambda}}^{(m,n)} \bigr) \\
 =& \det \left[ \Psi^{(m+n-1)}_{n-j+\hat{\lambda}_j,n-k+\lambda_k}\right]_{1\leq j,k\leq n} \nonumber
\end{align} 
($\hat{\lambda},\lambda\in\Lambda^{(m,n)}$), one is  led to a unitary multivariate discrete Fourier transform $\mathcal{F}^{(m,n)}:\ell^2(\Lambda^{(m,n)})\to \ell^2(\Lambda^{(m,n)})$ 
in the Hilbert space
$\ell^2(\Lambda^{(m,n)})$  of $f\in C(\Lambda^{(m,n)})$  with the standard inner product
\begin{equation}
\langle f ,g\rangle_{(m,n)}:= \sum_{\lambda\in \Lambda^{(m,n)}}  f(\lambda) \overline{g(\lambda)} .
\end{equation}
Specifically, we thus obtain the following multivariate generalization of the Fourier pairing in Eq. \eqref{dft:a}
\begin{subequations}
\begin{equation}\label{dft-n:a}
\hat{f}({\hat{\lambda}})= (\mathcal{F}^{(m,n)} f)({\hat{\lambda}}) := \langle f , \Psi^{(m,n)}_{\hat{\lambda},\cdot }\rangle_{(m,n)}= \sum_{\lambda\in\Lambda^{(m,n)}} \Psi^{(m,n)}_{\hat{\lambda},\lambda} f(\lambda) 
\end{equation}
($f\in \ell^2(\Lambda^{(m,n)})$, $ \hat{\lambda}\in\Lambda^{(m,n)}$), and of its inversion formula  in Eq. \eqref{dft:b}
\begin{equation}\label{dft-n:b}
f (\lambda)=  ((\mathcal{F}^{(m,n)})^{-1} \hat{f})(\lambda)=  \langle \hat{f} , \Psi^{(m,n)}_{\cdot ,\lambda}\rangle_{(m,n)}= \sum_{\hat{\lambda}\in\Lambda^{(m,n)}} \Psi^{(m,n)}_{\hat{\lambda},\lambda} \hat{f}(\hat{\lambda}) 
\end{equation}
\end{subequations}
($\hat{f}\in \ell^2(\Lambda^{(m,n)})$, $\lambda\in\Lambda^{(m,n)}$).  Upon degenerating the kernel in the Slater determinant on the second line of Eq. \eqref{fourier-kernel-n} to the parameter values pertaining to boundary conditions of Dirichlet and Neumann type as detailed in Section \ref{dft:sec}, one reproduces antisymmetric multivariate counterparts
of the DST-1,$\ldots$,DST-8 and DCT-1,$\ldots$,DCT-8 that belong to the families studied
in Refs. \cite{kli-pat:anti-b,moo-pat:cubature,moo-mot-pat:gaussian,czy-hri:generalized,hri-mot:discrete}.

\subsection{Proof of Theorem \ref{orthogonality-n:thm}}\label{orthogonality-n:prf}
Rather than to verify the orthogonality relations of Theorem \ref{orthogonality-n:thm} directly, we will instead prove the equivalent (by `column-row duality') orthogonality relations
of Remark \ref{orthogonality-n:rem}.
Since it is clear from the definitions that
\begin{align*}
  \hat{\Delta}_{\hat{\lambda}}^{(m,n)}  P_\lambda (\boldsymbol{\xi}_{\hat{\lambda}}^{(m,n)}) & P_\mu(\boldsymbol{\xi}_{\hat{\lambda}}^{(m,n)})  =
 \prod_{1\leq j\leq n}  \hat{\Delta}^{(m+n-1)}_{n-j+\hat{\lambda}_j}  
\\
\times 
 \det & \left[ p_{n-j+\lambda_j}\bigl(\xi^{(m+n-1)}_{n-k+\hat{\lambda}_k}\bigr)\right]_{1\leq j,k\leq n}   \det \left[ p_{n-j+\mu_j}\bigl(\xi^{(m+n-1)}_{n-k+\hat{\lambda}_k}\bigr)\right]_{1\leq j,k\leq n}   \\
=    \frac{1}{ \sqrt{ \Delta^{(m,n)}_\lambda   \Delta^{(m,n)}_\mu }     }&
   \det \left[ \Psi^{(m+n-1)}_{n-j+\hat{\lambda}_j,n-k+\lambda_k}\right]_{1\leq j,k\leq n}   \det \left[ \Psi^{(m+n-1)}_{n-j+\hat{\lambda}_j,n-k+\mu_k}\right]_{1\leq j,k\leq n} 
\end{align*}
(cf.  Eqs. \eqref{Schur:det},  \eqref{weights-n}, Remark \ref{factorization-plancherel-measure:rem}, and Eq. \eqref{fourier-kernel-n}), one has that
\begin{align*}
&\sqrt{ \Delta^{(m,n)}_\lambda   \Delta^{(m,n)}_\mu }     \sum_{\hat{\lambda}\in\Lambda^{(m,n)}}
P_\lambda (\boldsymbol{\xi}_{\hat{\lambda}}^{(m,n)}) P_\mu(\boldsymbol{\xi}_{\hat{\lambda}}^{(m,n)}) \hat{\Delta}_{\hat{\lambda}}^{(m,n)}  = \\
&
\sum_{ m+n>\tilde{\lambda}_1>\tilde{\lambda}_2>\cdots >\tilde{\lambda}_n\geq 0 } \!\!\!\!\!\!\!\!
   \det \left[ \Psi^{(m+n-1)}_{\tilde{\lambda}_j,n-k+\lambda_k}\right]_{1\leq j,k\leq n}   \det \left[ \Psi^{(m+n-1)}_{\tilde{\lambda}_j,n-k+\mu_k}\right]_{1\leq j,k\leq n}   .
\end{align*}
 With the aid of the Cauchy-Binet formula, 
the latter sum is rewritten in terms of the determinant
\begin{align*}
&\det \left[   \sum_{\hat{l}\in\Lambda^{(m+n-1)} }   \Psi^{(m+n-1)}_{\hat{l},n-j+\lambda_j }   \Psi^{(m+n-1)}_{\hat{l},n-k+\mu_k}    \right]_{1\leq j,k\leq n} \\
&= \det \left[    \langle    \Psi^{(m+n-1)}_{\cdot ,n-j+\lambda_j } ,  \Psi^{(m+n-1)}_{\cdot ,n-k+\mu_k} \rangle_{(m+n-1)}   \right]_{1\leq j,k\leq n}
=\begin{cases}
0 & \text{if}\ \mu \neq \lambda ,\\
1 & \text{if}\ \mu = \lambda 
\end{cases} 
\end{align*}
($\lambda,\mu\in \Lambda^{(m,n)}$), where in the last step we relied  on the orthogonality in Remark \ref{dual:rem}.

\bibliographystyle{amsplain}

\end{document}